\theoremstyle{plain}
\newtheorem{thm}{Theorem}
\newtheorem{cor}{Corollary}
\newtheorem{prop}{Proposition}
\theoremstyle{definition}
\newtheorem{defn}{Definition}
\title{Recovering functions from the modulation space $\mathscr{F}W$}
\author{Jeff Ledford}
\date{}
\begin{document}
\allowdisplaybreaks

\maketitle

\begin{abstract}
In this short note we show that functions in the modulation space $\mathscr{F}W=\{ f: \sum_{j\in\mathbb{Z}^n}\| \hat{f}(\cdot+2\pi j)\|_{L_\infty([-\pi,\pi]^n)}<\infty \}$ enjoy similar recovery properties as band-limited functions.  If $\{\phi_\alpha\}$ is a regular family of cardinal interpolators, then one can build an approximand of $f$ using the fundamental functions corresponding to $\phi_\alpha$.  Then taking the appropriate limit, one recovers $f$ both in norm and pointwise. 
\end{abstract}


\section{Introduction}

This note continues the study of approximation and recovery of functions using cardinal interpolants.  This problem has a rich history which has tended to focus on a specific approximation scheme, e.g. spline, multiquadric.  A few results in this area may be found in \cite{MN}, \cite{RS}, and \cite{baxter}.  Each deals with the recovery of band-limited functions.

It was the goal of \cite{z paper} to unite these methods under a single framework, regular families of cardinal interpolators.  The goal of this work is to use the same framework and move beyond band-limited functions.  A similar study has been done for scattered data in \cite{me 2}, although in a limited context.  The main result there is analogous to Theorem 2 below.  Similar work has also been carried out by Hamm in \cite{H}, where Sobolev functions are recovered.

The remainder of this note is organized as follows.  Section 2 contains definitions and basic facts needed to set up our problem.  The main result is stated and proved in Section 3, while the final section contains examples of regular families of cardinal intepolators.


\section{Definitions and Basic Facts}

We begin with a convention for the Fourier transform.
\begin{defn}
The \emph{Fourier transform} of a function $f(x)\in L^1(\mathbb{R}^n)$, is defined to be the function
\[
\hat{f}(\xi):=(2\pi)^{-n/2}\int_{\mathbb{R}^n}f(x)e^{-i\langle x,\xi \rangle }dx.
\]
\end{defn}
We make the usual extension to the class of tempered distributions using this convention.

We will be interested in the following modulation spaces.  Let $1\leq p \leq \infty$ and define
\begin{align*}
W(L_p,\ell_1):=&\left\{ f\in L_\infty(\mathbb{R}): \sum_{j\in\mathbb{Z}^n}\| f(\cdot+2\pi j) \|_{L_p([-\pi,\pi]^n)} <\infty  \right\}, \text{ and}\\
\mathscr{F}W(L_p,\ell_1):=&\left\{ f: \hat{f}\in W(L_p,\ell_1)   \right\}
\end{align*}
The space $W(L_\infty, \ell_1)$ is called the Wiener space.  These spaces are sometimes called Wiener amalgam spaces. To avoid cumbersome notation we will abbreviate these spaces as $W_p$ and $\mathscr{F}W_p$, respectively.  Additionally, we will write $W:=W(L_\infty, \ell_1)$ for the Wiener space.  For more information on modulation spaces the reader may consult Capters 6 and 11 of \cite{G}.
We will also make use of the classical Paley-Wiener space $PW$, given by
\[
PW:=\{f\in L_2(\mathbb{R}): \hat f (\xi)=0 \text{ a.e } \xi \notin [-\pi,\pi]^n  \}.
\]
The norms on these spaces are given by
\begin{align*}
\|f \|_{W_p}&:=\sum_{j\in\mathbb{Z}^n}\| f(\cdot+2\pi j) \|_{L_p([-\pi,\pi]^n)}, \\
\| f \|_{\mathscr{F}W_p}&:=\sum_{j\in\mathbb{Z}^n}\| \hat f(\cdot+2\pi j) \|_{L_p([-\pi,\pi]^n)}, \text{ and}\\
\|  f \|_{PW}&:=\| \hat{f} \|_{L_2([-\pi,\pi]^n)}
\end{align*}
Note the straightforward inclusion $\mathscr{F}W_p\subset \mathscr{F}L_p(\mathbb{R}^n)$ and $\mathscr{F}W\subset\mathscr{F}W_p$ for $1\leq p\leq \infty$.  Our next definitions come from \cite{z paper}.

\begin{defn}  We say that a function $\phi$ is a \emph{cardinal interpolator} if it satisfies the following conditions:
\begin{enumerate}
\item[(H1)]$\phi$ is a real valued slowly increasing function on $\mathbb{R}^n$,
\item[(H2)] $\hat{{\phi}}(\xi) \geq 0$ and $\hat{{\phi}}(\xi)\geq \delta >0$ in $[-\pi,\pi]^n$,
\item[(H3)] $\hat{{\phi}}\in C^{n+1}(\mathbb{R}^n\setminus\{0\})$,
\item[(H4)] There exists $\epsilon >0$ such that if $|\alpha|\leq n+1$, \\ $D^{\alpha}\hat\phi(\xi)=O(\| \xi \|^{-(n+\epsilon)})$ as $\|\xi\|\to\infty$,
\item[(H5)] For any multi-index $\alpha$, with $|\alpha|\leq n+1$, 
\[
\dfrac{\displaystyle\prod_{j=1}^{|\alpha|}D^{\alpha_j}\hat\phi  }     {[\hat\phi]^{|\alpha|+1}} \in L^{\infty}([-\pi,\pi]^n) \hspace{.25 in}\text{where}\hspace{.25in} \sum_{j=1}^{|\alpha|}\alpha_j = \alpha.
\] 
\end{enumerate}
\end{defn}

\begin{defn}
We call a family of functions $\{\phi_{\alpha}\}_{\alpha\in A}$ a \emph{regular family of cardinal interpolators} if for each $\alpha\in A\subset(0,\infty)$, $\phi_{\alpha}$ is a cardinal interpolator and in addition to this, we have:
\begin{enumerate}
\item[(R1)] For $j\in\mathbb{Z}^n\setminus\{0\}$, $\xi\in[-\pi,\pi]^n$, define $M_{j,\alpha}(\xi)=\displaystyle\dfrac{\hat{\phi}_\alpha(\xi+2\pi j)}{\hat{\phi}_\alpha(\xi)} $, then for $j\in\mathbb{Z}^n\setminus\{0\}$, $\displaystyle\lim_{\alpha\to\infty}M_{j,\alpha}(\xi)=0$ for almost every $\xi\in[-\pi,\pi]^n$.
\item[(R2)] There exists $M_j\in l^1(\mathbb{Z}^n\setminus\{0\})$, independent of $\alpha$, such that for all $j\in\mathbb{Z}^n\setminus\{0\}$, $M_{j,\alpha}(\xi)\leq M_j$.
\end{enumerate}
\end{defn}
These definitions allow us to form a fundamental function $L_\alpha$ associated to $\phi_\alpha$, defined by its Fourier transform
\[
\hat{L}_{\alpha}(\xi)=(2\pi)^{-n/2}\dfrac{\hat\phi_{\alpha}(\xi)}{\sum_{j\in\mathbb{Z}^n}\hat\phi_{\alpha}(\xi+2\pi j)}.
\]  
We will need a few properties which may be found in \cite{z paper}.
\begin{prop}[Proposition 3, \cite{z paper}]\label{Lto0}
If $\{\phi_\alpha\}$ is regular a regular family of cardinal interpolators, then $\displaystyle \lim_{\alpha\to\infty}\sum_{j\neq0}\hat{L}_{\phi_\alpha}(\xi+2\pi j)=0$ for a.e. $\xi\in[-\pi,\pi]^n$ and in $L^1([-\pi,\pi]^n)$.
\end{prop}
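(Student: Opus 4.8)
The plan is to exploit the $2\pi\mathbb{Z}^n$-periodicity hidden in the definition of $\hat{L}_\alpha$ in order to reduce the claim to two applications of dominated convergence. First I would observe that the denominator $S_\alpha(\xi):=\sum_{k\in\mathbb{Z}^n}\hat\phi_\alpha(\xi+2\pi k)$ is periodic, finite (by (H4), which makes it summable), and bounded below (by (H2), since $\hat\phi_\alpha(\xi)\geq\delta$ on the fundamental cube), so that $S_\alpha(\xi+2\pi j)=S_\alpha(\xi)$ for every $j$. Consequently each shifted fundamental function may be written as $\hat{L}_\alpha(\xi+2\pi j)=(2\pi)^{-n/2}\hat\phi_\alpha(\xi+2\pi j)/S_\alpha(\xi)$, and summing over $j\neq 0$ and dividing numerator and denominator by $\hat\phi_\alpha(\xi)$ gives the compact form
\[
\sum_{j\neq 0}\hat{L}_\alpha(\xi+2\pi j)=(2\pi)^{-n/2}\,\frac{T_\alpha(\xi)}{1+T_\alpha(\xi)},\qquad T_\alpha(\xi):=\sum_{j\neq 0}M_{j,\alpha}(\xi),
\]
where $M_{j,\alpha}$ is exactly the quantity appearing in (R1)--(R2).

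Next, I would prove that $T_\alpha(\xi)\to 0$ for almost every $\xi\in[-\pi,\pi]^n$. This is where the regularity hypotheses do the work: (R1) gives $M_{j,\alpha}(\xi)\to 0$ as $\alpha\to\infty$ for each fixed $j\neq 0$ and a.e.\ $\xi$, while (R2) provides the summable, $\alpha$-independent majorant $M_j$. A discrete dominated convergence argument (dominated convergence against counting measure on $\mathbb{Z}^n\setminus\{0\}$) then lets me pass the limit inside the sum to conclude $T_\alpha(\xi)\to 0$ a.e. Since the scalar map $t\mapsto t/(1+t)$ is continuous on $[0,\infty)$ and vanishes at $t=0$, the pointwise a.e.\ statement of the proposition follows immediately from the displayed identity.

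Finally, for the $L^1([-\pi,\pi]^n)$ convergence I would invoke the elementary bound $t/(1+t)\leq 1$ for $t\geq 0$, which shows that the nonnegative functions $\xi\mapsto\sum_{j\neq 0}\hat{L}_\alpha(\xi+2\pi j)$ are all bounded by the constant $(2\pi)^{-n/2}$. That constant is integrable over the bounded cube $[-\pi,\pi]^n$, so a second (ordinary) application of the dominated convergence theorem upgrades the a.e.\ convergence to convergence in $L^1$.

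I expect the only genuine obstacle to be the justification of interchanging the limit $\alpha\to\infty$ with the infinite sum defining $T_\alpha$; this is precisely what (R2) is designed to permit, and one should be slightly careful that the a.e.\ exceptional sets furnished by (R1) can be taken uniformly over the countable index set $j$, so that their union remains null. Everything else---the periodicity reduction and the two dominated convergence steps---is routine once the identity above is in place.
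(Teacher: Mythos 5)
Your argument is correct: the periodicity of the denominator $S_\alpha$, the reduction to $T_\alpha/(1+T_\alpha)$ with $T_\alpha=\sum_{j\neq 0}M_{j,\alpha}$, discrete dominated convergence via (R1)--(R2), and the uniform bound $(2\pi)^{-n/2}$ for the $L^1$ step all go through, and the caveat about taking a countable union of the exceptional null sets is exactly the right point to flag. The paper only quotes this proposition from the reference without reproducing its proof, and your route is the standard one for this statement, so there is nothing substantive to compare against.
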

\begin{prop}[Theorem 1, \cite{z paper}]
Suppose that $\{\phi_{\alpha}\}$ is a regular family of cardinal interpolators and let $f\in PW_\pi$.  Then we have the following limits:
\begin{enumerate}
\item[$(a)$] $\displaystyle \lim_{\alpha\to\infty}\bigg\| \sum_{j\in\mathbb{Z}^n}f(j)L_{{\phi}_\alpha}(\cdot-j) - f \bigg \|_{L^2(\mathbb{R}^n)} =0,$
\item[$(b)$] $\displaystyle \lim_{\alpha\to\infty}\bigg| \sum_{j\in\mathbb{Z}^n}f(j)L_{{\phi}_\alpha}(\cdot-j) - f \bigg| =0$  uniformly in $\mathbb{R}^n$.
\end{enumerate}
\end{prop}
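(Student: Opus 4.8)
The plan is to move everything to the Fourier side and exploit the decomposition of $\mathbb{R}^n$ into translates of $Q_0 := [-\pi,\pi]^n$. Writing $I_\alpha f := \sum_{j} f(j)L_{\phi_\alpha}(\cdot - j)$, I would first record that since $f\in PW$ its samples $\{f(j)\}$ are (up to the factor $(2\pi)^{n/2}$) the Fourier coefficients of $\hat f\in L^2(Q_0)$, hence lie in $\ell^2$, so the series defining $I_\alpha f$ converges and
\[
\widehat{I_\alpha f}(\xi) = \hat L_{\phi_\alpha}(\xi)\,\sigma(\xi), \qquad \sigma(\xi) := \sum_{j\in\mathbb{Z}^n}f(j)e^{-i\langle j,\xi\rangle}.
\]
The symbol $\sigma$ is $2\pi$-periodic, and Poisson summation gives $\sigma = (2\pi)^{n/2}\sum_{m}\hat f(\cdot+2\pi m)$; because $\hat f$ is supported in $Q_0$, this means $\sigma = (2\pi)^{n/2}\hat f$ on $Q_0$ while $\sigma(\cdot+2\pi k)=\sigma$ on every translate.

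Next, using $\hat L_{\phi_\alpha}(\xi) = (2\pi)^{-n/2}\hat\phi_\alpha(\xi)\big/\sum_j\hat\phi_\alpha(\xi+2\pi j)$, I would isolate the two facts that drive the estimate. On $Q_0$,
\[
(2\pi)^{n/2}\hat L_{\phi_\alpha}(\xi) = \frac{1}{1+\sum_{j\neq0}M_{j,\alpha}(\xi)},
\]
which lies in $[0,1]$ and, by (R1)--(R2) together with dominated convergence in the index $j$ (the summable majorant $\{M_j\}$ from (R2)), tends to $1$ for a.e.\ $\xi\in Q_0$. Off $Q_0$ the relevant quantity is the tail $\sum_{k\neq0}\hat L_{\phi_\alpha}(\cdot+2\pi k)$, which by Proposition \ref{Lto0} tends to $0$ a.e.\ and in $L^1(Q_0)$, and is dominated by the constant $(2\pi)^{-n/2}\sum_{k\neq0}M_k$.

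Then I would estimate the error cell by cell. For part (a), Plancherel gives $\|I_\alpha f - f\|_{L^2}=\|\widehat{I_\alpha f}-\hat f\|_{L^2}$, and I split $\mathbb{R}^n = Q_0\cup\bigcup_{k\neq0}(Q_0+2\pi k)$. On $Q_0$ the integrand is $|\hat f|^2\,|(2\pi)^{n/2}\hat L_{\phi_\alpha}-1|^2$, dominated by $|\hat f|^2$ and vanishing a.e., so it tends to $0$ by dominated convergence. On the outer cells $\hat f$ vanishes, so after the substitution $\xi=\eta+2\pi k$ and the periodicity of $\sigma$ the contribution equals $(2\pi)^n\int_{Q_0}|\hat f(\eta)|^2\sum_{k\neq0}|\hat L_{\phi_\alpha}(\eta+2\pi k)|^2\,d\eta$; using $\sum_{k\neq0}|\hat L_{\phi_\alpha}|^2\le(\sum_{k\neq0}\hat L_{\phi_\alpha})^2$ (valid since $\hat L_{\phi_\alpha}\ge0$) and Proposition \ref{Lto0}, this also tends to $0$ by dominated convergence. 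For part (b) I would bound the pointwise error uniformly by Fourier inversion, namely $|I_\alpha f(x)-f(x)|\le(2\pi)^{-n/2}\|\widehat{I_\alpha f}-\hat f\|_{L^1}$ for every $x$, and run the identical cell decomposition with $L^1$ in place of $L^2$ (on $Q_0$ use $|\hat f|\in L^1(Q_0)$ since $Q_0$ has finite measure; on the outer cells use the $L^1$ convergence from Proposition \ref{Lto0}).

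The main obstacle is the control of the outer-cell, or \emph{aliasing}, terms: the periodized symbol $\sigma$ is large precisely where $\hat f$ is large, and it is copied onto every translate $Q_0+2\pi k$, so convergence hinges entirely on the tails $\hat L_{\phi_\alpha}(\cdot+2\pi k)$ collapsing fast enough. This is exactly what (R1)--(R2) and Proposition \ref{Lto0} supply: (R2) furnishes the $\ell^1$ majorant needed to invoke dominated convergence both in the index $j$ (for the central cell) and across the cells $k$ (for the tail), while (R1) and Proposition \ref{Lto0} give the pointwise vanishing. The central-cell term, by contrast, is the comparatively harmless quasi-interpolation error that disappears once $(2\pi)^{n/2}\hat L_{\phi_\alpha}\to1$.
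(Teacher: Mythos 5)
This proposition is not proved in the paper at all: it is imported verbatim as Theorem 1 of \cite{z paper}, so there is no in-paper argument to compare against. Your proof is correct and is exactly the argument this framework is designed to support: pass to the Fourier side, use the $\ell^2$ samples and Poisson summation to write $\widehat{I_\alpha f}=\hat L_{\phi_\alpha}\sigma$, handle the central cell via $(2\pi)^{n/2}\hat L_{\phi_\alpha}\to 1$ boundedly on $[-\pi,\pi]^n$, and kill the aliasing cells with (R2) as the $\ell^1$ majorant and Proposition \ref{Lto0} for the pointwise collapse; it is also consistent with how the paper's own Theorem 1 reuses these same ingredients. One small precision for part (b): on the outer cells the quantity to control is $\int_{Q_0}|\hat f|\sum_{k\neq0}\hat L_{\phi_\alpha}(\cdot+2\pi k)$, and since $\hat f\in L^2(Q_0)$ need not be bounded, the $L^1(Q_0)$ convergence of the tail is not by itself enough --- you should invoke the a.e.\ convergence together with the constant bound $(2\pi)^{-n/2}\sum_{k\neq0}M_k$ (both of which you already recorded) and apply dominated convergence against $|\hat f|\in L^1(Q_0)$.
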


Our goal is to build approximands of a given function $f$ with sufficiently nice properties.  To do this we first need $\{f_k:k\in\mathbb{Z}^n \}$.  We define $f_k$ by its Fourier transform
\begin{equation}
\hat{f}_k(\xi)=\hat{f}(\xi+2\pi k)\chi_{[-\pi,\pi]^n}(\xi).
\end{equation}
Now our approximand will take the form
\begin{equation}
J_{\alpha}[f](x):= \sum_{j,k\in\mathbb{Z}^n}f_{k}(j)L_{\alpha}(x-j)e^{2\pi i \langle x,k \rangle }.
\end{equation}

\section{Main Result}

Our main result is the following theorem.
\begin{thm}
Suppose that $f\in \mathscr{F}W$ and $\{\phi_\alpha \}$ a regular family of cardinal interpolators. We have
\[
\lim_{\alpha\to\infty}\| f- J_\alpha[f]  \|_{\mathscr{F}W_p}=0,
\]
for $1\leq p \leq \infty$.
\end{thm}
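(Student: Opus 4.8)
The plan is to pass to the Fourier side, where the amalgam norm lives, and to turn $J_\alpha[f]$ into a periodic multiplier acting on the pieces $\hat f_k$. First I would compute $\widehat{J_\alpha[f]}$. For fixed $k$ the inner sum $\sum_j f_k(j)L_\alpha(\cdot-j)$ is the cardinal series attached to the band-limited piece $f_k\in PW$, so its transform is $\hat L_\alpha(\xi)\sum_j f_k(j)e^{-i\langle j,\xi\rangle}$; by Poisson summation this equals $(2\pi)^{n/2}\hat L_\alpha(\xi)F_k(\xi)$, where $F_k$ is the $2\pi$-periodization of $\hat f_k$ and agrees with $\hat f(\cdot+2\pi k)$ on $[-\pi,\pi]^n$. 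The modulation $e^{2\pi i\langle x,k\rangle}$ translates the transform by $2\pi k$, and since $F_k$ is $2\pi$-periodic one gets
\[
\widehat{J_\alpha[f]}(\xi)=(2\pi)^{n/2}\sum_{k\in\mathbb{Z}^n}\hat L_\alpha(\xi-2\pi k)F_k(\xi).
\]
The hypothesis $f\in\mathscr{F}W$ gives $\sum_k\|F_k\|_{L_\infty([-\pi,\pi]^n)}=\|f\|_{\mathscr{F}W}<\infty$, which both places each $f_k$ in $PW$ and justifies the interchanges of summation above.

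Next I would exploit a reproduction identity: a direct computation from the definition of $\hat L_\alpha$ shows that its $2\pi$-periodization is the constant $(2\pi)^{-n/2}$, that is $(2\pi)^{n/2}\sum_{l}\hat L_\alpha(\xi-2\pi l)=1$. Evaluating the error at $\xi+2\pi m$ with $\xi\in[-\pi,\pi]^n$, using $\hat f(\xi+2\pi m)=F_m(\xi)$ and periodicity of the $F_k$, and then writing $F_m(\xi)=(2\pi)^{n/2}\sum_l\hat L_\alpha(\xi-2\pi l)F_m(\xi)$ and subtracting to cancel the diagonal term, I obtain the purely off-diagonal representation
\[
\widehat{(f-J_\alpha[f])}(\xi+2\pi m)=(2\pi)^{n/2}\sum_{l\neq0}\hat L_\alpha(\xi-2\pi l)\big(F_m(\xi)-F_{m+l}(\xi)\big).
\]
Taking $L_p([-\pi,\pi]^n)$ norms, pulling $\|F_m-F_{m+l}\|_{L_\infty}$ out of the nonnegative multiplier, and summing over $m$ (where $\sum_m\|F_{m+l}\|_{L_\infty}=\sum_m\|F_m\|_{L_\infty}=\|f\|_{\mathscr{F}W}$ by translation invariance of the index) gives
\[
\|f-J_\alpha[f]\|_{\mathscr{F}W_p}\leq 2\|f\|_{\mathscr{F}W}\sum_{l\neq0}\big\|(2\pi)^{n/2}\hat L_\alpha(\cdot-2\pi l)\big\|_{L_p([-\pi,\pi]^n)}.
\]
Everything then reduces to showing that $S_\alpha^{(p)}:=\sum_{l\neq0}\|(2\pi)^{n/2}\hat L_\alpha(\cdot-2\pi l)\|_{L_p}$ tends to $0$.

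For $1\leq p<\infty$ this is a double application of dominated convergence. The pointwise bound $(2\pi)^{n/2}\hat L_\alpha(\xi-2\pi l)\leq M_{-l,\alpha}(\xi)\leq M_{-l}$ from (R2) dominates the integrands by the $\alpha$-independent, $\ell_1$-summable constants $M_{-l}$; by (R1) (equivalently Proposition~\ref{Lto0}) each $(2\pi)^{n/2}\hat L_\alpha(\cdot-2\pi l)\to0$ almost everywhere, so for fixed $l$ the norm $\|(2\pi)^{n/2}\hat L_\alpha(\cdot-2\pi l)\|_{L_p}\to0$ on $[-\pi,\pi]^n$. Dominating the resulting series in $l$ by $(2\pi)^{n/p}M_{-l}\in\ell_1$ and applying dominated convergence once more over $\mathbb{Z}^n\setminus\{0\}$ yields $S_\alpha^{(p)}\to0$, which is the claim.

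The main obstacle is the endpoint $p=\infty$, which in fact carries the whole theorem: since $\|g\|_{L_p([-\pi,\pi]^n)}\leq(2\pi)^{n/p}\|g\|_{L_\infty}$ one has $\|\cdot\|_{\mathscr{F}W_p}\leq(2\pi)^{n/p}\|\cdot\|_{\mathscr{F}W}$, so $\mathscr{F}W=\mathscr{F}W_\infty$ is the strongest norm and its convergence implies all the others. The trouble is that the reduction above now requires $\|(2\pi)^{n/2}\hat L_\alpha(\cdot-2\pi l)\|_{L_\infty([-\pi,\pi]^n)}\to0$ for each $l$, and the almost-everywhere and $L^1$ convergence supplied by Proposition~\ref{Lto0} does not upgrade to uniform convergence: a.e. decay of $M_{-l,\alpha}$ controlled only by the majorant $M_{-l}$ leaves open a persistent essential supremum, and the crude uniform-in-$m$ bounds (e.g. $|\widehat{(f-J_\alpha f)}(\xi+2\pi m)|\leq 2E_\alpha(\xi)G(\xi)$ with $E_\alpha=1-(2\pi)^{n/2}\hat L_\alpha$ and $G=\sum_m|F_m|$) are too lossy to sum over $m$. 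I therefore expect the endpoint to need genuine uniform smallness of the off-diagonal mass, i.e. $M_{j,\alpha}\to0$ in $L_\infty([-\pi,\pi]^n)$. Because the $\hat\phi_\alpha$ are continuous by (H3) and in the concrete families of Section 4 the ratios $M_{j,\alpha}$ decrease monotonically in $\alpha$, a Dini-type argument delivers this uniform convergence there; the cleanest general route is thus to strengthen (R1) (or Proposition~\ref{Lto0}) to an $L_\infty$ statement and feed it into the same estimate, after which the $p=\infty$ case, and hence every $1\leq p\leq\infty$, follows verbatim.
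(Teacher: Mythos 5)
Your argument for $1\le p<\infty$ is correct and is essentially the paper's own proof in a slightly cleaner algebraic dress: the paper splits the error by the triangle inequality into a diagonal piece (converted to off-diagonal mass via $1-(2\pi)^{n/2}\hat L_\alpha(\xi)=(2\pi)^{n/2}\sum_{k\neq0}\hat L_\alpha(\xi+2\pi k)$) plus an off-diagonal piece, whereas you use the partition-of-unity identity once to get the single representation with the differences $F_m-F_{m+l}$; both then reduce everything to $\sum_{l\neq0}\|(2\pi)^{n/2}\hat L_\alpha(\cdot-2\pi l)\|_{L_p}\to0$ via (R2) and two applications of dominated convergence. Nothing is gained or lost either way for finite $p$.

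Your unease about $p=\infty$ is not excessive caution; it points at a genuine gap that is present in the paper's proof as well. The paper's final step invokes Proposition~\ref{Lto0} (a.e.\ and $L^1$ convergence) together with dominated convergence, and this simply does not produce convergence of $L_\infty([-\pi,\pi]^n)$ norms: a.e.\ convergence with a constant majorant leaves the essential supremum untouched. Worse, the endpoint claim appears to be actually false for the paper's own examples, not merely harder to prove. All the families in Section 4 have radial, hence even, $\hat\phi_\alpha$, so at a boundary point such as $\xi_0=(-\pi,0,\dots,0)$ the denominator $\sum_j\hat\phi_\alpha(\xi_0+2\pi j)$ contains the two equal terms $\hat\phi_\alpha(\xi_0)$ and $\hat\phi_\alpha(\xi_0+2\pi e_1)$, forcing $(2\pi)^{n/2}\hat L_\alpha(\xi_0)\le 1/2$ and hence $\sum_{k\neq0}(2\pi)^{n/2}\hat L_\alpha(\xi_0+2\pi k)\ge 1/2$ for \emph{every} $\alpha$; by continuity of $\hat L_\alpha$ near $\xi_0$ this persists on sets of positive measure. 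Taking $f\in PW$ with $\hat f$ continuous on $[-\pi,\pi]^n$ and nonvanishing at $\xi_0$ then gives $\|f-J_\alpha[f]\|_{\mathscr{F}W_\infty}\ge\frac12|\hat f(\xi_0)|$ uniformly in $\alpha$. For the same reason your proposed repair does not go through: the pointwise limit of $M_{e_1,\alpha}$ on the face $\xi_1=-\pi$ is $1$, not $0$ (this is exactly why (R1) is stated only almost everywhere), so no Dini or strengthened-(R1) argument can deliver $L_\infty$ convergence for these families. The honest conclusion is that the theorem should be read for $1\le p<\infty$, which is all that the corollaries use (they only need a single finite $p$, e.g.\ $p=1$).
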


\begin{proof}
We note that $f\in \mathscr{F}W$ implies that $f\in \mathcal{F}W_p$ for all $1\leq p\leq \infty$, additionally $\{f_k:k\in\mathbb{Z}^n\}\subset PW$.  As a result, we may use the formula
\begin{equation}\label{hatf}
(2\pi)^{n/2}\hat{f_k}(\xi)=\sum_{j\in\mathbb{Z}^n}f_k(j)e^{-i\langle j, \xi \rangle }, \quad \text{a.e. }\xi\in [-\pi,\pi]^n. 
\end{equation}
Now we may begin our calculation.
\begin{align*}
& \| f- J_\alpha[f] \|_{\mathscr{F}W_p}\\
=& \sum_{l\in\mathbb{Z}^n} \|  \hat{f}(\cdot +2\pi l) - \widehat{J_\alpha[f]}(\cdot+2\pi l)    \|_{L_p([-\pi,\pi]^n)}\\
=& \sum_{l\in\mathbb{Z}^n} \| \hat{f}(\cdot +2\pi l) - (2\pi)^{n/2}\sum_{k\in\mathbb{Z}^n}\hat{L}_\alpha(\cdot+2\pi (l-k))\hat{f}_{k}     \|_{L_p([-\pi,\pi]^n}\\
\leq & \sum_{l\in\mathbb{Z}^n} \| (1-(2\pi)^{n/2}\hat{L}_{\alpha})  \hat{f}_l   \|_{L_p([-\pi,\pi]^n} \\
&\quad+ (2\pi)^{n/2}\sum_{l\in\mathbb{Z}^n}\| \sum_{k\neq l} \hat{L}_\alpha(\cdot+2\pi (l-k))\hat{f}_{k} \|_{L_p([\pi,\pi]^n)}\\
=&  (2\pi)^{n/2}\sum_{l\in\mathbb{Z}^n} \| \sum_{k\neq 0} \hat{L}_{\alpha}(\cdot+2\pi k)  \hat{f}_l   \|_{L_p([-\pi,\pi]^n} \\
&\quad+(2\pi)^{n/2}\sum_{l\in\mathbb{Z}^n}\| \sum_{k\neq 0} \hat{L}_\alpha(\cdot+2\pi k)\hat{f}_{l-k} \|_{L_p([\pi,\pi]^n)}\\
\leq & 2(2\pi)^{n/2}\sum_{k\neq 0}M_k\sum_{l\in\mathbb{Z}^n}\| \hat{f}_l \|_{L_p([-\pi,\pi]^n)} \\
=& \left(2(2\pi)^{n/2}\sum_{k\neq 0}M_k\right) \| f \|_{\mathscr{F}W_p}
\end{align*}

The first two equalities are just the definition and the formula \eqref{hatf}.  The first inequality is the triangle inequality.  The next equality follows from the fact that
\[
 1-\hat{L}_\alpha(\xi)= (2\pi)^{n/2}\sum_{k\neq 0}\hat{L}_\alpha(\xi+2\pi k).
 \] 
The final inequality is $(R2)$, where we have switched the sums with Tonelli's theorem.
Now Proposition \ref{Lto0} and the Dominated Convergence theorem finish the proof.

\end{proof}

This theorem leads to a pointwise result.

\begin{cor}
Under the hypotheses of Theorem 1, we have
\[
\lim_{\alpha\to 0}|f(x)-J_\alpha[f](x)|=0
\]
uniformly on $\mathbb{R}^n$.
\end{cor}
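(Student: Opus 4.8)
The plan is to deduce the uniform pointwise statement directly from the norm convergence of Theorem 1, specialized to the exponent $p=1$, by exploiting the elementary embedding $\mathscr{F}W_1 \hookrightarrow L_\infty(\mathbb{R}^n)$. The guiding observation is that the supremum of a function is controlled by the $L_1$-norm of its Fourier transform, and that this $L_1$-norm is precisely what the $\mathscr{F}W_1$-norm records.

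To make this precise I would first record the embedding. For any $g$ whose Fourier transform lies in $L_1(\mathbb{R}^n)$, tiling $\mathbb{R}^n$ by the cubes $2\pi l + [-\pi,\pi]^n$, $l\in\mathbb{Z}^n$, gives
\[
\|\hat{g}\|_{L_1(\mathbb{R}^n)} = \sum_{l\in\mathbb{Z}^n}\|\hat{g}(\cdot + 2\pi l)\|_{L_1([-\pi,\pi]^n)} = \|g\|_{\mathscr{F}W_1}.
\]
Since $\hat{g}\in L_1$, Fourier inversion holds pointwise with a continuous representative, so
\[
\sup_{x\in\mathbb{R}^n}|g(x)| \leq (2\pi)^{-n/2}\|\hat{g}\|_{L_1(\mathbb{R}^n)} = (2\pi)^{-n/2}\|g\|_{\mathscr{F}W_1}.
\]

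Next I would apply this with $g = f - J_\alpha[f]$ to obtain
\[
\sup_{x\in\mathbb{R}^n}|f(x) - J_\alpha[f](x)| \leq (2\pi)^{-n/2}\|f - J_\alpha[f]\|_{\mathscr{F}W_1}.
\]
Theorem 1 with $p=1$ shows that the right-hand side tends to $0$ as $\alpha\to\infty$, which yields the desired uniform convergence.

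The one point requiring care — and the place I expect the only real friction — is the legitimacy of the pointwise Fourier inversion applied to $g = f - J_\alpha[f]$. This needs $\widehat{f - J_\alpha[f]}$ to belong to $L_1(\mathbb{R}^n)$, equivalently $\|f - J_\alpha[f]\|_{\mathscr{F}W_1} < \infty$. This finiteness is already contained in the estimates from the proof of Theorem 1, where the quantity is bounded by a constant multiple of $\|f\|_{\mathscr{F}W_1}$, finite since $f\in\mathscr{F}W \subset \mathscr{F}W_1$. Hence both $f$ and $J_\alpha[f]$ admit continuous representatives and the displayed bound is justified, completing the argument.
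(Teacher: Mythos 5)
Your proposal is correct and follows essentially the same route as the paper: bound $\sup_x|f(x)-J_\alpha[f](x)|$ by $(2\pi)^{-n/2}\|\widehat{f-J_\alpha[f]}\|_{L_1(\mathbb{R}^n)}$ via Fourier inversion, periodize to identify this with the $\mathscr{F}W_1$-norm, and invoke Theorem 1. The only cosmetic difference is that the paper inserts an extra H\"{o}lder step to pass to $\|\cdot\|_{\mathscr{F}W_p}$ for general $p$, whereas you specialize directly to $p=1$, which suffices; your added remark justifying pointwise inversion for $f-J_\alpha[f]$ is a point the paper leaves implicit.
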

\begin{proof}
To see this, we use the Fourier integral representation.
\begin{align*}
&|f(x)-J_\alpha[f](x)|\\
\leq & (2\pi)^{-n/2}\int_{\mathbb{R}^n}|\hat{f}(\xi)-\widehat{J_\alpha[f]}(\xi)|d\xi\\
=&(2\pi)^{-n/2}\sum_{l\in\mathbb{Z}^n} \| \hat{f}(\cdot+2\pi l)-\widehat{J_\alpha[f]}(\cdot+2\pi l)   \|_{L_1([-\pi,\pi]^n)}\\
\leq& (2\pi)^{1-1/p-n/2}\sum_{l\in\mathbb{Z}^n}\| \hat{f}(\cdot+2\pi l)-\widehat{J_\alpha[f]}(\cdot+2\pi l) \|_{L_p([-\pi,\pi]^n)}\\
=& (2\pi)^{1-1/p-n/2} \| f-J_\alpha[f] \|_{\mathscr{F}W_p}
\end{align*}
Here we have periodized the first integral, then applied H\"{o}lder's inequality.  Now taking the limit yields the result.
\end{proof}
We also get an $L_p$ result of sorts.
\begin{cor}
Under the hypotheses of Theorem 1, we have
\[
\lim_{\alpha\to 0}\|f-J_\alpha[f]\|_{\mathscr{F}L_p(\mathbb{R}^n)}=0.
\]
\end{cor}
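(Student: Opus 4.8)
The plan is to reduce this corollary to Theorem~1 via the elementary embedding $\mathscr{F}W_p\hookrightarrow\mathscr{F}L_p(\mathbb{R}^n)$, which in fact has norm bounded by $1$. That is, I would first establish
\[
\|g\|_{\mathscr{F}L_p(\mathbb{R}^n)}\le\|g\|_{\mathscr{F}W_p},\qquad 1\le p\le\infty,
\]
for every $g$, and then apply it to $g=f-J_\alpha[f]$. Since Theorem~1 already guarantees $f-J_\alpha[f]\in\mathscr{F}W_p$ with $\|f-J_\alpha[f]\|_{\mathscr{F}W_p}\to 0$ in the relevant limit, the conclusion follows immediately.

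To prove the embedding for $1\le p<\infty$, I would periodize the defining integral exactly as in the proof of Corollary~1. Tiling $\mathbb{R}^n$ by the translates $2\pi j+[-\pi,\pi]^n$ gives
\[
\|g\|_{\mathscr{F}L_p(\mathbb{R}^n)}^p=\int_{\mathbb{R}^n}|\hat g(\xi)|^p\,d\xi=\sum_{j\in\mathbb{Z}^n}\|\hat g(\cdot+2\pi j)\|_{L_p([-\pi,\pi]^n)}^p.
\]
Writing $a_j=\|\hat g(\cdot+2\pi j)\|_{L_p([-\pi,\pi]^n)}$, this identity says $\|g\|_{\mathscr{F}L_p}=\|(a_j)\|_{\ell_p(\mathbb{Z}^n)}$, whereas by definition $\|g\|_{\mathscr{F}W_p}=\|(a_j)\|_{\ell_1(\mathbb{Z}^n)}$. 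The monotonicity of the sequence norms, $\|(a_j)\|_{\ell_p}\le\|(a_j)\|_{\ell_1}$ for $p\ge 1$, then yields the desired inequality.

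The case $p=\infty$ I would treat separately, since there the tiling identity reads $\|g\|_{\mathscr{F}L_\infty}=\|\hat g\|_{L_\infty(\mathbb{R}^n)}=\sup_{j}\|\hat g(\cdot+2\pi j)\|_{L_\infty([-\pi,\pi]^n)}=\sup_j a_j$, and the trivial bound $\sup_j a_j\le\sum_j a_j$ again gives $\|g\|_{\mathscr{F}L_\infty}\le\|g\|_{\mathscr{F}W}$. I do not anticipate a genuine obstacle in this corollary: the only point requiring a little care is the interchange of the essential supremum with the periodization in the $p=\infty$ case, which is immediate from the definition of the essential supremum over a countable union. With the embedding in hand, taking $\alpha$ to its limit in $\|f-J_\alpha[f]\|_{\mathscr{F}L_p}\le\|f-J_\alpha[f]\|_{\mathscr{F}W_p}$ and invoking Theorem~1 completes the argument.
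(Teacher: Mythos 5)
Your proposal is correct and follows essentially the same route as the paper: the paper's proof is precisely the one-line embedding $\|f\|_{\mathscr{F}L_p(\mathbb{R}^n)}=\|\hat f\|_{L_p(\mathbb{R}^n)}\le\sum_{k}\|\hat f(\cdot-2\pi k)\|_{L_p([-\pi,\pi]^n)}=\|f\|_{\mathscr{F}W_p}$ followed by Theorem 1. Your version merely makes the tiling identity and the $\ell_p\subset\ell_1$ monotonicity (and the $p=\infty$ case) explicit, which the paper compresses into a single triangle-inequality step.
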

\begin{proof}
To see this we need only note that
\[
\|  f \|_{\mathscr{F}(\mathbb{R}^n)} = \| \hat{f} \|_{L_p(\mathbb{R}^n)} \leq \sum_{k\in\mathbb{Z}^n}\| \hat{f}(\cdot-2\pi k)  \|_{L_p([-\pi,\pi]^n)}=\| f \|_{\mathscr{F}W_p}.
\]
\end{proof}

For certain $p$, we can relax the conditions on $f$ somewhat.  Specifically, we have the following.
\begin{thm}
Suppose that $f\in \mathscr{F}W_2$ and $\{\phi_\alpha\}$ is a regular family of cardinal interpolators, then we have
\begin{enumerate}
\item[(a)] $ \displaystyle\lim_{\alpha\to\infty}\| f-J_\alpha[f]  \|_{\mathscr{F}W_2}=0$,
\item[(b)] $ \displaystyle\lim_{\alpha\to\infty}\| f-J_\alpha[f]  \|_{L_2(\mathbb{R}^n)}=0$, and
\item[(c)] $ \displaystyle\lim_{\alpha\to\infty}| f(x)-J_\alpha[f](x)|=0,$ uniformly on $\mathbb{R}^n$.
\end{enumerate}
\end{thm}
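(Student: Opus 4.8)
The plan is to notice that the three assertions are not independent: part (a) carries the whole argument, and parts (b), (c) follow from it by exactly the norm comparisons already recorded in Corollaries 1 and 2, now specialized to $p=2$. So I would prove (a) first and reduce (b), (c) to it.

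For part (a), the key observation is that the proof of Theorem 1 never used the full strength of $f\in\mathscr{F}W$; it used only two consequences of that hypothesis, namely that $\{f_k:k\in\mathbb{Z}^n\}\subset PW$ (so that the sampling formula \eqref{hatf} is available) and that $\|f\|_{\mathscr{F}W_p}<\infty$ supplies a summable majorant for the dominated convergence step. Both persist under the weaker assumption $f\in\mathscr{F}W_2$ once we fix $p=2$. Indeed, $f\in\mathscr{F}W_2$ says precisely that $\hat f(\cdot+2\pi k)\in L_2([-\pi,\pi]^n)$ for every $k$ with $\ell^1$-summable norms; since $PW$ is exactly the class of functions whose Fourier transform lies in $L_2([-\pi,\pi]^n)$, we get $f_k\in PW$ for each $k$, and \eqref{hatf} holds. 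I would then rerun verbatim the chain of inequalities in the proof of Theorem 1 with $p=2$, arriving at the majorant $\big(2(2\pi)^{n/2}\sum_{k\neq 0}M_k\big)\|f\|_{\mathscr{F}W_2}$; Proposition~\ref{Lto0} gives $\sum_{k\neq 0}\hat L_\alpha(\cdot+2\pi k)\to 0$ a.e., and the Dominated Convergence Theorem applied at the level of the series in $l$, with summable majorant $(2\pi)^{-n/2}M_k\|f\|_{\mathscr{F}W_2}$, forces the limit to be $0$.

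With (a) in hand, (b) follows by Plancherel together with the disjoint-support computation of Corollary 2: for any $g$ one has $\|g\|_{L_2(\mathbb{R}^n)}=\|\hat g\|_{L_2(\mathbb{R}^n)}=\big(\sum_l\|\hat g(\cdot+2\pi l)\|_{L_2([-\pi,\pi]^n)}^2\big)^{1/2}\leq \sum_l\|\hat g(\cdot+2\pi l)\|_{L_2([-\pi,\pi]^n)}=\|g\|_{\mathscr{F}W_2}$, the middle inequality being $\|\cdot\|_{\ell^2}\leq\|\cdot\|_{\ell^1}$. Applying this to $g=f-J_\alpha[f]$ and invoking (a) gives (b). Likewise (c) is the argument of Corollary 1 with $p=2$: periodizing the inversion integral and applying H\"older on $[-\pi,\pi]^n$ yields $|f(x)-J_\alpha[f](x)|\leq (2\pi)^{1-1/2-n/2}\|f-J_\alpha[f]\|_{\mathscr{F}W_2}$ uniformly in $x$, and (a) again sends the right-hand side to $0$.

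The main obstacle, and really the only genuine point, is justifying that the relaxation to $\mathscr{F}W_2$ is legitimate: that the samples $f_k(j)$ and the approximand $J_\alpha[f]$ still make sense and that the dominated convergence step retains a summable majorant. This is exactly where $p=2$ is forced. The machinery is built on $PW$, whose defining integrability is $L_2$ on $[-\pi,\pi]^n$, so membership $f_k\in PW$ matches the local-$L_2$ control supplied by $\mathscr{F}W_2$; for $p<2$ the space $\mathscr{F}W_p$ would not guarantee that $\hat f_k$ is square-integrable, and the sampling representation \eqref{hatf} would be unavailable. This is why the theorem is stated only for this exponent. Once that compatibility is checked, no estimate beyond those of Theorem 1 and Corollaries 1--2 is needed.
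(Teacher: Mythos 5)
Your proposal is correct and follows essentially the same route as the paper, which simply observes that $f\in\mathscr{F}W_2$ still yields $f_k\in PW$ (so \eqref{hatf} applies) and then reruns the Theorem 1 calculation with $p=2$, with (b) and (c) obtained from the norm comparisons of Corollaries 1 and 2. Your write-up just makes explicit the details the paper compresses into ``mutatis mutandis.''
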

\begin{proof}
The proof is the same \emph{mutatis mutandis}.  We simply note that in this case we can use \eqref{hatf} and carry out the calculation as before.
\end{proof}

\section{Examples}
In this section we provide concrete examples of families of functions which exhibit this behavior.  Since any family of regular cardinal interpolators will work, we can use the examples found in \cite{z paper}.  For more information regarding these functions the reader may consult \cite{z paper} and the references found there.
\subsection{Polyharmonic Cardinal Splines}
Suppose that $\Delta$ is the $n$-dimensional Laplacian and $\Delta^k f=\Delta(\Delta^{k-1}f)$.  For $k\geq 1$, if $\Delta^k f =0$ on $\mathbb{R}^n\setminus\mathbb{Z}^n$ and $f\in C^{2k-2}(\mathbb{R}^n)$, then $f$ is called a polyharmonic spline.  We note that in this case we have $\hat\phi_{k}(\xi)=\|\xi\|^{-2k}$.  The appropriate family of cardinal interpolators is given by $\{ \phi_k: k\in\mathbb{N}  \}$.
\subsection{Gaussians}
The family given by $\{ e^{-\|\cdot\|^2/(4\alpha)}: \alpha\geq 1   \}$ is a regular family of cardinal interpolators.
\subsection{Multiquadrics}
The family given by $\{ (\| \cdot \|^2+c^2)^\alpha_j: j\in\mathbb{N}, c>0 \text{ fixed}   \}$ where $\{\alpha_j\}\subset [1/2,\infty )$ and dist$(\{a_j\},\mathbb{N})>0$ is a regular family of cardinal interpolators.  This is also true if we consider $\{ (\|\cdot\|^2+c^2)^{\alpha}: c\geq 1   \}$ here we may take $\alpha\in \mathbb{R}\setminus\mathbb{N}_0$, this final result was shown in \cite{HL}, with a partial result appearing in \cite{z paper}.


\end{document}